\documentclass[11pt]{amsart}
\usepackage[margin=1.1in]{geometry}
\usepackage{amsmath,amssymb,amsthm,mathtools}
\usepackage[T1]{fontenc}
\usepackage{lmodern}
\usepackage{microtype}
\usepackage[colorlinks=true,linkcolor=blue,citecolor=blue,urlcolor=blue]{hyperref}

\newtheorem{theorem}{Theorem}
\newtheorem{lemma}[theorem]{Lemma}
\newtheorem{proposition}[theorem]{Proposition}

\newcommand{\tr}{\operatorname{tr}}
\newcommand{\Cl}{\mathcal C}

\title{More than $83.69\%$ of the zeros of the Riemann zeta function are distinct}
\author{Kristian Muri Knausg{\aa}rd}
\email{kristianmk@ieee.org}
\date{26 September 2026}

\begin{document}

\begin{abstract}
The lower asymptotic proportion of distinct nontrivial zeros of the Riemann
zeta function, counted with multiplicity, is at least $0.8369928814\ldots$.
The previous bound was $0.83625\ldots$. As in the proof of that bound, an
unconditional version of Montgomery's pair-correlation theorem gives an
asymptotic energy estimate. The new ingredient is a short matrix inequality
with a free clipping parameter. It strengthens the lower bound for this energy
in terms of the number of distinct zeros. The gain is a nonnegative correction
from overlaps between different nearby zeros on the critical line, which is
retained even
when some of these zeros are double. The matrix
inequality, the threshold lemma, the block dichotomy, the counting assembly
and the exact arithmetic are proved formally in Lean~4. The constant relies on
a computer-assisted local inequality from recent work that has not yet been
refereed. That computation was re-run independently, and every imported input
is listed. This paper is primarily an experiment in AI-assisted mathematical
research (Section~\ref{sec:verify}).
\end{abstract}

\maketitle

\section{Introduction}

The Riemann zeta function has infinitely many nontrivial zeros
$\rho=\sigma+i\gamma$ with $0<\sigma<1$. It is conjectured that every zero is
\emph{simple}, meaning not repeated. Let $N(T)$ count the zeros with
$0<\gamma\le T$, with multiplicity, and let $N_d(T)$ count the \emph{distinct}
ones. If all zeros were simple, then $N_d=N$.

Montgomery's pair-correlation method~\cite{Mon73} gives lower bounds for such
proportions. An argument discovered by Claude (Anthropic)~\cite{Claude26} and
verified by Alp\"oge and Furman~\cite{AF}, together with a second proof by
Lamzouri~\cite{Lam}, shows unconditionally that
$\liminf N_d(T)/N(T)\ge0.8362503518\ldots$. Wang~\cite{Wang} improved this
by about $3\cdot10^{-8}$. The purpose of this paper is to prove the following bound.

\begin{theorem}\label{thm:main}
$\displaystyle\liminf_{T\to\infty}\frac{N_d(T)}{N(T)}\ \ge\
q:=\frac{16260119298029}{19426831050000}=0.83699288145242\ldots$
\end{theorem}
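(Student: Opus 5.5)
We follow the Montgomery pair-correlation framework, as in the proofs of the previous bound. Let $m_\rho$ be the multiplicity of a zero $\rho$. Cauchy--Schwarz over distinct zeros gives
\[
N(T)^2=\Bigl(\sum_{\rho\ \mathrm{distinct}} m_\rho\Bigr)^2\le N_d(T)\sum_{\rho\ \mathrm{distinct}} m_\rho^2 .
\]
A lower bound for $N_d/N$ therefore follows from an upper bound on the energy $\sum m_\rho^2$. The analytic input is the unconditional pair-correlation theorem. For a nonnegative test function $r$ with $\hat r$ supported in $[-1,1]$ and $r(0)=1$, it gives an asymptotic for
\[
E_r(T)=\sum_{0<\gamma,\gamma'\le T} r\Bigl(\tfrac{(\gamma-\gamma')\log T}{2\pi}\Bigr)
\]
of the form $\bigl(c_r+o(1)\bigr)N(T)$, where $c_r$ is explicit in terms of $\hat r$. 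We take $r$ to be the optimized test function of the earlier work, $F$ being the corresponding explicit function appearing there. We also import, as the paper states, the computer-assisted local inequality from the recent work.

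The new step is to bound $E_r$ from below in terms of $N_d$ more efficiently than the diagonal bound $E_r\ge\sum m_\rho^2$. Group the zeros in each short window into blocks, and consider the matrix $M=(r(\delta_{ij}))$ indexed by the distinct zeros of the block, weighted by multiplicities. The block contribution is $m^{\top}Mm$. Split off the diagonal $\sum m_i^2$ and keep the off-diagonal overlaps $\sum_{i\ne j}m_im_j r(\delta_{ij})$. These are nonnegative for nearby zeros on the critical line. The earlier argument either lost these overlaps or handled them only for simple zeros.

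The matrix inequality with a clipping parameter $\lambda$ would have the form
\[
m^{\top}Mm\ \ge\ \sum_i \min(m_i,\lambda)\cdot(\text{linear term}) + (\text{overlap correction}).
\]
It is proved by comparing $M$ with a clipped diagonal plus a positive semidefinite remainder, after checking nonnegativity of a $2\times2$ or small-block quadratic form. We then prove a threshold lemma together with a block dichotomy. Either a block consists of zeros that are far apart, in which case the imported local inequality gives the needed gain, or it contains a multiple zero close to another zero, in which case the clipped overlap term compensates.

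Summing over blocks yields a linear inequality of the shape
\[
c_r N \ge A(\lambda)N + B(\lambda)\bigl(N-N_d\bigr) + o(N).
\]
Optimizing over $\lambda$ and the parameters of $r$ then gives $N_d/N\ge q$, with $q$ a rational number certified by exact arithmetic.

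I expect the main obstacle to be the block dichotomy: ensuring the overlap correction survives when double zeros appear, without double counting across block boundaries. I would first try fixed-length windows on the scale $2\pi/\log T$ with a greedy boundary assignment, assigning each pair overlap to the block of its left endpoint. Only if that fails would I pass to a more careful chaining argument. The final numerical step is then a routine, formally checkable rational computation.
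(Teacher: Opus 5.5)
The decisive gap is that your argument never deals with zeros off the critical line, whereas Theorem~\ref{thm:main} is unconditional. Everything in your outline rests on off-diagonal terms being nonnegative: the diagonal bound $E_r\ge\sum m_\rho^2$, the form $m^{\top}Mm$ with $M_{ij}=r(\delta_{ij})\ge0$, and the pair-by-pair bookkeeping. The unconditional Montgomery theorem (Proposition~\ref{prop:energy}) is not about a sum over real ordinate differences. It controls $E_\varepsilon=\sum_{z,w}\nu_z\nu_wK_\varepsilon(z-w)^2$ with $z_\rho=i(\rho-\frac12)L/(2\pi)$, and $z_\rho$ is non-real when $\rho$ is off the line. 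An off-line pair $z,\bar z$ interacts with an on-line point $w$ through a multiple of $\mathrm{Re}\,K_\varepsilon(z-w)^2$, which can be negative. So none of your lower bounds holds term by term, and your outline is at best an argument under RH.

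Under RH it needs no clipping at all: $m_im_j\ge1$ keeps every overlap, and $E\ge3N-2N_d+\tr(U-I)^2$ follows entrywise from $\nu^2\ge3\nu-2$. Cauchy--Schwarz is the wrong starting point, since it only gives about $1/C\approx0.75$.

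The missing idea is the operator splitting $E_\varepsilon=\tr A^2$ with $A=P+Q$. Here $P=VDV^*\succeq0$ comes from the on-line zeros of multiplicity one or two. In $Q$, each on-line zero of multiplicity at least three gives a positive rank-one term. Each off-line pair gives $\nu(vw^*+wv^*)$, which has at most one positive eigenvalue. So $Q$ has at most $h+k$ positive eigenvalues, and Lemma~\ref{lem:threshold} yields $\tr A^2\ge2c\tr A-2c\tr P+\tr\varphi_c(P)-c^2(h+k)$.

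This is also why your new step targets the wrong object. After Lemma~\ref{lem:threshold}, the on-line energy survives only as the spectral quantity $\tr\varphi_c(G)$ with $G=D^{1/2}UD^{1/2}$, and the split into diagonal plus overlaps is gone. That, and not $m^{\top}Mm$, is where double zeros threaten the defect.

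What is needed is Theorem~\ref{thm:gram}: $\tr\varphi_c(G)-\tr D^2\ge\tr\Psi_\tau(U)$ for $1\le d_i\le2$ and $c\ge\max(1+\tau,2+\tau/2)$. It clips eigenvalues rather than multiplicities. It is proved by inserting $B=2D+2D^{-1/2}\Cl D^{-1/2}$ into Lemma~\ref{lem:var}. Your placeholder with $\min(m_i,\lambda)$ and a $2\times2$ check does not produce it.

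Since $\tr\Psi_\tau(U)$ is not a sum over pairs, assigning overlaps to blocks by left endpoint also fails. One needs pinching over consecutive blocks, averaging over the $m$ shifts, and the actual block dichotomy. Either every eigenvalue of $U_B$ is at most $1+\tau$, so that $\tr\Psi_\tau(U_B)=\tr(U_B-I)^2$ and Proposition~\ref{prop:seven} applies, or one eigenvalue alone contributes at least $\tau^2\ge\Delta_m$. The constraint $\Delta_m\le\tau^2$ fixes $m=1298$, and hence $a$, $\beta$ and $q$. Here $\tau$ cannot be raised freely, because $c\ge1+\tau$ must keep $r_k-2a>0$.

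Your dichotomy (far-apart zeros versus a multiple zero near another) plays no such role. Proposition~\ref{prop:seven} is applied to every window, and its spread term is paid globally because the total spread is $N+o(N)$. That proposition is also certified only for the specific kernel $K$ of Section~\ref{sec:count}, not for an arbitrary optimized $r$ from earlier work.
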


\emph{The idea.} Rescale the zeros so that their mean spacing is one: a zero
$\rho=\frac12+i\gamma$ on the critical line gets the coordinate
$y_\rho=\gamma\log T/(2\pi)$. Fix an even probability density $f=\eta^2$ on
$[-\frac12,\frac12]$, and attach to this zero the wave packet
\[
 F_{y_\rho}(u)=\eta(u)\,e^{2\pi iuy_\rho}\in L^2\bigl(-\tfrac12,\tfrac12\bigr).
\]
These are unit vectors, and $\langle F_y,F_{y'}\rangle=\hat f(y-y')$, where
$\hat f(x)=\int f(u)e^{-2\pi ixu}\,du$. The overlap is $1$ for coincident zeros,
close to $1$ for nearby zeros, and small for zeros far apart.
Section~\ref{sec:count} uses a smoothed version of these vectors, labelled there
by $z_\rho=-y_\rho$.

Zeros off the critical line come in pairs placed symmetrically about the line.
Each pair contributes a Hermitian term with at most one positive eigenvalue, so
the construction needs no assumption about where the zeros lie. Summing the
projections onto the vectors above and these pair terms, with multiplicity,
gives an operator $A$ with $\tr A=N$. The energy $E=\tr A^2$ is a pair sum over
all zeros. For ordered pairs $(\rho,\rho')$ on the critical line, including
$\rho=\rho'$, the summands are $\hat f(y_\rho-y_{\rho'})^2$, counted with
multiplicity. The full identity, including off-line zeros, is given in
Section~\ref{sec:count}. Montgomery's pair-correlation theorem, in its unconditional
form~\cite{BGST}, computes this energy asymptotically: for the smoothed operator,
$E=(C_\varepsilon+o(1))N$ with
$C_\varepsilon\to C=\int f^2+\iint|s-t|f(s)f(t)\,ds\,dt$, and $C\le2-H<2$ for the
window used here (Proposition~\ref{prop:energy}).

A zero on the critical line of multiplicity $\nu$ contributes $\nu$ to the trace
and $\nu^2$ to the diagonal part of the energy sum. A double zero contributes $4$
to this diagonal part, whereas two distinct simple zeros contribute $2$, with an
additional mutual term $2\hat f(y-y')^2$ that is small when they are far apart.
Both configurations contribute $2$ to the trace. Linear algebra turns this into $E\ge3N-2N_d$,
hence $\liminf_{T\to\infty}N_d/N\ge\frac{3-C}2$. The Montgomery--Taylor window
gives the previous bound this way. The window used here gives up about
$2\cdot10^{-5}$ of that baseline in exchange for a larger gain below.

The improvement adds a nonnegative \emph{defect} term to this inequality. The
defect measures how far the Gram matrix $U=(\hat f(y_i-y_j))$ of the distinct
on-line zeros of multiplicity one or two is from the identity, and nearby zeros
make it positive. The total spread of these points is at most $N+o(N)$. A
certified local inequality (Proposition~\ref{prop:seven}) relates the spread to
the squared overlaps: for any seven consecutive points, a weighted sum of their squared overlaps plus a multiple of
their spread is at least a fixed positive constant. As a result, the defect is
at least a fixed amount per point, minus a penalty proportional to how widely
the points are spread.

Recent simple-zero records~\cite{Ainta,Trmdy,Shi,Tawan} use such a defect for
simple zeros. Counting distinct zeros requires keeping the defect when double
zeros are present too, and Theorem~\ref{thm:gram} does exactly that.

\section{The matrix inequalities}\label{sec:matrix}

For Hermitian $X$ and real $f$ write $\tr f(X)=\sum_i f(\lambda_i(X))$. Write
$X\preceq Y$ when $Y-X$ is positive semidefinite. For real $c$ and $\tau\ge0$ let
\[
 \varphi_c(x)=\begin{cases}x^2,&x\le c,\\2cx-c^2,&x>c,\end{cases}
 \qquad
 \Psi_\tau(x)=\begin{cases}(x-1)^2,&x\le 1+\tau,\\2\tau(x-1)-\tau^2,&x>1+\tau.\end{cases}
\]
Both functions are $C^1$ with nondecreasing derivative, hence convex on
$\mathbb R$.

\begin{theorem}[Mixed-multiplicity Gram inequality]\label{thm:gram}
Let $U$ be a Hermitian $l\times l$ matrix with unit diagonal, let
$D=\operatorname{diag}(d_i)$ with $1\le d_i\le2$, and let $G=D^{1/2}UD^{1/2}$.
If $\tau\ge0$ and $c\ge\max(1+\tau,\,2+\tau/2)$, then
\[
 \tr\varphi_c(G)-\tr D^2\ \ge\ \tr\Psi_\tau(U).
\]
\end{theorem}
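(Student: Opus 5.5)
The plan is to linearize both sides by the variational (Legendre) description of the two clipped quadratics, and then compare them with a single well-chosen test matrix. For real $t$ and $\tau\ge0$ one has $\Psi_\tau(1+t)=\sup_{a\le\tau}(2at-a^2)$, and for real $x$ one has $\varphi_c(x)=\sup_{b\le c}(2bx-b^2)$. The first step is the matrix version of this.

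\textbf{Step 1 (variational formula).} For Hermitian $X$ and any Hermitian $B\preceq cI$,
\[
\tr\varphi_c(X)\ \ge\ \tr(2BX-B^2),
\]
with equality at $B=\min(X,c)$ in the spectral sense. Indeed, $\tr\varphi_c(X)-\tr(2BX-B^2)=\tr(X-B)^2-\tr(X-c)_+^2$. I would prove this is nonnegative by writing $X-B=(X-c)+(cI-B)$ with $cI-B\succeq0$, and using $\tr(P+Q)^2\ge\tr P_+^2$ for $Q\succeq0$. The latter follows from $\lambda_k(P+Q)\ge\lambda_k(P)$ (Weyl) and the fact that the positive eigenvalues of $P$ give a lower bound for those of $P+Q$. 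The analogous identity gives $\tr\Psi_\tau(U)=\tr(2A(U-I)-A^2)$ for $A=\min(U-I,\tau I)$, computed spectrally; for the right-hand side only this equality case is needed.

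\textbf{Step 2 (choice of test matrix).} With $A$ as above, so that $A\preceq\tau I$ and $A$ commutes with $U$, I take a test matrix of the form
\[
B=D+D^{1/2}AD^{1/2}\quad\text{or a variant such as}\quad B=D+\tfrac12\bigl(D^{1/2}AD^{1/2}+A\bigr).
\]
The variant is suggested by the threshold $c\ge\max(1+\tau,2+\tau/2)$: the term $2+\tau/2$ looks like the bound $\|D\|+\tfrac12\tau$, and $1+\tau$ looks like $1+\|A\|$. The requirement is $B\preceq cI$, and the constraint on $c$ should be exactly what makes this hold, using $1\le d_i\le2$ and $A\preceq\tau I$.

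\textbf{Step 3 (expand and compare).} Substituting $G=D^{1/2}UD^{1/2}$ and writing $U=I+W$, where $W$ has zero diagonal, I expand
\[
\tr(2BG-B^2)-\tr D^2\quad\text{versus}\quad\tr(2AW-A^2).
\]
The $D$-only terms cancel against $\tr D^2$, because $\operatorname{diag}(W)=0$. The remaining terms are quadratic forms in the entries of $W$ and $A$ weighted by products $d_i^{1/2}d_j^{1/2}$ or $d_id_j$. The difference should reduce to a sum of the shape $\sum_{i,j}(\text{weight}_{ij}-1)\,\bigl(\text{nonnegative quantity}\bigr)_{ij}$, plus a perfect square such as $\tr\bigl(D^{1/2}AD^{1/2}-A\bigr)^2$, arranged with the correct sign. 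The needed positivity would come from $d_id_j\ge1$, from $|W_{ij}|^2\ge0$, and from $\tr(AW)\ge\tr A^2$, which holds because $A=\min(W,\tau)$ commutes with $W$.

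\textbf{Main obstacle.} The main obstacle is Step 2 together with Step 3: finding a $B$ that simultaneously satisfies $B\preceq cI$ under exactly the stated threshold and makes the expansion in Step 3 manifestly nonnegative. The naive choice $D^{1/2}(I+A)D^{1/2}$ only gives $B\preceq2(1+\tau)$, which is too weak, so some averaging between $A$ and its $D$-conjugate seems necessary. If that fails, I would fall back on a pointwise route: use $\varphi_c\ge\Psi$-type bounds on the eigenvalues of $G$ via Cauchy interlacing against $U$, comparing $\lambda_k(G)$ with $\lambda_k(U)$ through $D^{1/2}$ congruence (Ostrowski's theorem gives $\lambda_k(G)=\theta_k\lambda_k(U)$ with $\theta_k\in[1,2]$). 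I would then check the resulting scalar inequality in $(\lambda,\theta)$ using the threshold on $c$.
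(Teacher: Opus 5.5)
Your Step~1 is correct. It is Lemma~\ref{lem:var} with $B$ halved, proved via Weyl rather than via the diagonal of $B$ in an eigenbasis of $G$. Your identity $\tr\Psi_\tau(U)=\tr\bigl(2A(U-I)-A^2\bigr)$ with $A=\min(U-I,\tau I)$ is exactly the right-hand-side identity the paper uses.

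The gap is in Step~2. You never produce an admissible test matrix, and both candidates you write down violate $B\preceq cI$ at the stated threshold. Take $D=2I$ and any $U$ for which $U-I$ has an eigenvalue $\ge\tau$ (e.g.\ $l=2$, $U_{12}=U_{21}=s\ge\tau>0$). Then $D^{1/2}AD^{1/2}=2A$, so your candidates become $2I+2A$ and $2I+\tfrac32A$. Their top eigenvalues are $2+2\tau$ and $2+\tfrac32\tau$, and both exceed $\max(1+\tau,2+\tau/2)$ for every $\tau>0$. So Step~1 cannot be invoked with either choice. Reading the threshold as $\|D\|+\tfrac12\tau$ and $1+\|A\|$ is what misleads you.

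The missing idea is to conjugate by $D^{-1/2}$ instead of $D^{1/2}$: take $B=D+M$ with $M=D^{-1/2}AD^{-1/2}$.
\begin{itemize}
\item \emph{Admissibility.} From $A\preceq\tau I$ you get $M\preceq\tau D^{-1}$, so $B\preceq\operatorname{diag}(d_i+\tau/d_i)$. Since $d+\tau/d$ is convex on $[1,2]$, it is bounded by its endpoint values $1+\tau$ and $2+\tau/2$. This is where the threshold comes from.
\item \emph{Step~3.} The same choice collapses the expansion. Using $U_{ii}=1$ and cyclicity, $\tr(DG)=\tr D^2$, $\tr(MG)=\tr(AU)$ and $\tr(DM)=\tr A$. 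Hence
\[
 \tr(2BG-B^2)-\tr D^2=2\tr\bigl(A(U-I)\bigr)-\tr M^2,\qquad
 \tr M^2=\sum_{i,j}\frac{|A_{ij}|^2}{d_id_j}\le\tr A^2 .
\]
\end{itemize}
So the condition $d_id_j\ge1$ you anticipated does enter, but as $1/(d_id_j)\le1$.

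Your Ostrowski fallback does not close the gap either. The theorem involves $\tr D^2$, which is not a spectral quantity of $G$ or of $U$. A scalar inequality in $(\lambda_k,\theta_k)$ would therefore need a lossless way to account for it. At $U=I$ the theorem is an equality for every admissible $D$, so there is no slack, and you do not indicate how to do this.
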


\begin{lemma}\label{lem:var}
If $G,B$ are Hermitian and $B\preceq2cI$, then
$\tr\varphi_c(G)\ge\tr(BG)-\frac14\tr B^2$.
\end{lemma}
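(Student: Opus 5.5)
The plan is to use the variational (Legendre) description of $\varphi_c$ and reduce the matrix statement to a scalar one in the eigenbasis of $G$. The key scalar fact is
\[
 \varphi_c(x)=\sup_{b\le 2c}\Bigl(bx-\tfrac14 b^2\Bigr)\qquad(x\in\mathbb R),
\]
so that $\varphi_c(x)\ge bx-\frac14b^2$ whenever $b\le 2c$. To check it, note that $b\mapsto bx-\frac14b^2$ is a concave parabola with vertex at $b=2x$:
\begin{itemize}
\item if $x\le c$, the vertex $b=2x$ is admissible and the maximum is $x^2$;
\item if $x>c$, the constrained maximum is at the endpoint $b=2c$, giving $2cx-c^2$.
\end{itemize}
Both cases agree with the definition of $\varphi_c$.

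Next I pass to matrices. Let $v_1,\dots,v_l$ be an orthonormal eigenbasis of $G$ with $Gv_i=\lambda_iv_i$, and write $b_{ij}=\langle v_i,Bv_j\rangle$ for the entries of $B$ in this basis. Then
\[
 \tr(BG)=\sum_i\lambda_i b_{ii}.
\]
Since $B$ is Hermitian, the diagonal entries $b_{ii}$ are real. The hypothesis $B\preceq 2cI$ gives $b_{ii}=\langle v_i,Bv_i\rangle\le 2c$ for every $i$.

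For the quadratic term, the Frobenius norm is basis-independent, so
\[
 \tr B^2=\sum_{i,j}|b_{ij}|^2\ \ge\ \sum_i b_{ii}^2.
\]
This step simply discards the off-diagonal entries.

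Combining these facts with the scalar inequality applied at $x=\lambda_i$, $b=b_{ii}$, I obtain
\[
 \tr(BG)-\tfrac14\tr B^2\ \le\ \sum_i\Bigl(\lambda_i b_{ii}-\tfrac14 b_{ii}^2\Bigr)\ \le\ \sum_i\varphi_c(\lambda_i)=\tr\varphi_c(G),
\]
which is the lemma.

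I do not expect a real obstacle. The only points needing care are that $b_{ii}$ is real, and that the constraint $B\preceq 2cI$ is exactly what makes every diagonal entry admissible in the supremum. This is why the one-sided bound on $B$ suffices, with no lower bound on $B$ required.
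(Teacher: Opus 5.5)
Your proposal is correct and follows the paper's proof essentially verbatim: you work in an eigenbasis of $G$, keep only the diagonal entries $b_{ii}\le2c$ via $\tr B^2\ge\sum_i b_{ii}^2$, and reduce to the scalar bound $b\lambda-\frac14b^2\le\varphi_c(\lambda)$ for $b\le2c$. The only cosmetic difference is that you verify this scalar bound by maximizing the concave parabola in $b$, whereas the paper writes it as $\lambda^2-(\lambda-\frac b2)^2$ and compares with $(\lambda-c)^2$ when $\lambda>c$.
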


In control-theoretic terms, Lemma~\ref{lem:var} is a dual certificate. Every
matrix $B$ that satisfies the linear matrix inequality $B\preceq2cI$ yields
a lower bound on $\tr\varphi_c(G)$, and the bound is attained at
$B=2\min(G,cI)$. The proof of Theorem~\ref{thm:gram} exhibits one admissible $B$.

\begin{proof}[Proof of Lemma~\ref{lem:var}]
In an eigenbasis of $G$, $\tr(BG)=\sum b_{ii}\lambda_i$ and
$\tr B^2\ge\sum b_{ii}^2$, with $b_{ii}\le2c$. It suffices to show
$b\lambda-\frac14b^2=\lambda^2-(\lambda-\frac b2)^2\le\varphi_c(\lambda)$
for $b\le2c$. If $\lambda\le c$ this is clear. If $\lambda>c$, then
$\lambda-\frac b2\ge\lambda-c>0$, and the claim follows.
\end{proof}

\begin{proof}[Proof of Theorem~\ref{thm:gram}]
Let $\Cl$ be $U-I$ with each eigenvalue $\mu$ replaced by $\min(\mu,\tau)$, so
that $\Cl\preceq\tau I$. Put $M=D^{-1/2}\Cl D^{-1/2}$ and $B=2D+2M$. Then
$B\preceq2D+2\tau D^{-1}\preceq2cI$. For the last step, $d+\tau/d$ is convex
on $[1,2]$, so it is at most $\max(1+\tau,2+\tau/2)\le c$. Using $U_{ii}=1$ and
cyclicity, $\tr(DG)=\tr D^2$, $\tr(MG)=\tr(\Cl U)$ and $\tr(DM)=\tr \Cl$. Hence
\[
 \tr(BG)-\tfrac14\tr B^2-\tr D^2=2\tr\bigl(\Cl(U-I)\bigr)-\tr M^2 .
\]
Moreover $\tr M^2=\sum|\Cl_{ij}|^2/(d_id_j)\le\tr \Cl^2$. By Lemma~\ref{lem:var},
\[
 \tr\varphi_c(G)-\tr D^2\ \ge\ 2\tr\bigl(\Cl(U-I)\bigr)-\tr \Cl^2
 =\sum_\mu\bigl(2\mu\min(\mu,\tau)-\min(\mu,\tau)^2\bigr),
\]
and each summand equals $\Psi_\tau(1+\mu)$.
\end{proof}

The counting argument also requires a threshold inequality of the kind used
in~\cite{Claude26,AF}. The following proof avoids eigenvalue interlacing. For
Hermitian $X$ let $X_\pm\succeq0$ carry its positive and negative eigenvalues,
so that $X=X_+-X_-$ and $X_+X_-=0$.

\begin{lemma}\label{lem:threshold}
Let $P\succeq0$, let $Q$ be Hermitian with at most $b$ positive eigenvalues,
let $A=P+Q$, and let $c$ be real. Then
$\tr A^2\ge2c\tr A-2c\tr P+\tr\varphi_c(P)-c^2b$.
\end{lemma}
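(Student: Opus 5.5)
The plan is to put the inequality into a centred form and then handle the perturbation $Q$ by compressing to the subspace where $Q\preceq0$. Work in dimension $l$ and let $\lambda_i$ be the eigenvalues of $P$. Since $P\succeq0$, a direct computation gives $\varphi_c(\lambda)-2c\lambda=(c-\lambda)_+^2-c^2$ for every $\lambda\ge0$. Hence
\[
 -2c\tr P+\tr\varphi_c(P)=\tr\bigl((cI-P)_+^2\bigr)-c^2l .
\]
Combining this with $\tr(A-cI)^2=\tr A^2-2c\tr A+c^2l$, the claim is equivalent to
\[
 \tr(X+Q)^2\ \ge\ \tr g(X)-c^2b,\qquad X:=P-cI,\quad g(x):=(x_-)^2 .
\]
Here $g$ is convex and nonincreasing, and $X\succeq-cI$. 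This last fact means $0\le g\le c^2$ on the spectrum of $X$. (If $c<0$ then $g(X)=0$ and the claim is trivial, so assume $c\ge0$.)

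Let $\Pi$ be the orthogonal projection onto $\ker Q_+$, which has codimension at most $b$, and put $Y=X+Q$. The argument proceeds in four steps.

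First, dropping off-diagonal blocks only lowers the Frobenius norm, so $\tr Y^2\ge\tr(\Pi Y\Pi)^2$. Moreover $\tr(\Pi Y\Pi)^2\ge\tr g(\Pi Y\Pi)$, computed on $\operatorname{ran}\Pi$.

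Second, on $\operatorname{ran}\Pi$ we have $\Pi Q\Pi=-\Pi Q_-\Pi\preceq0$, so $\Pi Y\Pi\preceq\Pi X\Pi$. Since $g$ is nonincreasing, the trace function $\tr g$ is order-reversing, by Weyl monotonicity of the ordered eigenvalues. This gives $\tr g(\Pi Y\Pi)\ge\tr g(\Pi X\Pi)$.

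Third, compare the compression $\Pi X\Pi$ (of size $l-r$, where $r\le b$) with $X$ itself. By Cauchy interlacing, the $k$-th smallest eigenvalue of the compression is at least the $k$-th smallest eigenvalue of $X$ and at most the $(k+r)$-th smallest. Pairing the $k$-th smallest eigenvalue of the compression with the $(k+r)$-th smallest eigenvalue of $X$, and using that $g$ is nonincreasing, the compression terms dominate all but at most $r$ terms of $\tr g(X)$. Each omitted term is at most $c^2$. Hence $\tr g(\Pi X\Pi)\ge\tr g(X)-c^2b$.

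Fourth, chaining the three steps yields the centred inequality, and undoing the rewriting gives the lemma.

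The main obstacle is the third step, which is where interlacing enters. The paper advertises a proof that avoids interlacing, so I would first try to replace steps two and three with a dual certificate in the spirit of Lemma~\ref{lem:var}. The elementary bound $\tr A^2\ge\tr(BA)-\frac14\tr B^2$ holds for every Hermitian $B$, since $\tr(A-B/2)^2\ge0$. I would take $B=2cI-2(I-\Pi')(cI-P)_+(I-\Pi')$, where $I-\Pi'$ projects onto $\ker Q_+$, and use $\tr(KQ)\le0$ for $K\succeq0$ supported on $\ker Q_+$. The delicate point is then to bound the loss from the projection by $c^2b$ using only $0\preceq(cI-P)_+\preceq cI$ and $\operatorname{rank}(\Pi')\le b$. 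If that bookkeeping fails to close, I will fall back on the interlacing argument above, which is certainly valid.
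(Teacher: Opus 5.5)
Your interlacing argument is correct, and it takes a genuinely different route from the paper's.

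\textbf{Your route.} You centre at $cI$, compress to $\ker Q_+$, and then use Weyl monotonicity and Cauchy interlacing. The loss $c^2b$ is thereby charged to the at most $b$ smallest eigenvalues of $X=P-cI$ that the compression discards. Each of these costs at most $c^2$ because $X\succeq-cI$.

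\textbf{The paper's route.} The paper never compresses. It writes $Z=Q_-$ and $A=(P-Z)+Q_+$. The cross term $2\tr((P-Z)Q_+)=2\tr(PQ_+)$ is nonnegative because $ZQ_+=0$, so $\tr A^2\ge\tr(P-Z)^2+\tr Q_+^2$. The loss is then charged to the positive eigenvalues of $Q$ through the scalar bound $x^2\ge2cx-c^2$. Finally, $Z$ is handled by writing $\varphi_c(p)=p^2-(p-c)_+^2$ and using $\tr(X_-Z)\ge0$ together with $\tr(X_+-Z)^2\ge0$.

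\textbf{What each buys.} The paper's proof needs only $\tr(ST)\ge0$ for $S,T\succeq0$ and completed squares. That is how it avoids interlacing, and it suits the Lean formalization. Yours relies on heavier spectral comparison theorems, but it makes the source of the loss transparent. It even gives the sharper loss $\sum_{j\le r}\bigl(c-\lambda_j^{\uparrow}(P)\bigr)_+^2$, with $r=\operatorname{rank}Q_+$, in place of $c^2b$.

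\textbf{The sketched certificate.} Falling back on interlacing is the right call, because the certificate you sketch does not close. Write $\Pi$ for the projection onto $\ker Q_+$ and $R=I-\Pi$, so that $B=2cI-2\Pi X_-\Pi$. The resulting bound contains the term $-2\tr(RX_-RX_+)$. This term is not controlled by $0\preceq X_-\preceq cI$ and $\operatorname{rank}R\le b$, since $X_+$ is unbounded. For example, take $c>0$, $P=\operatorname{diag}(0,M)$, and $Q$ a positive rank-one matrix along $(1,1)/\sqrt2$. The certificate then yields only $\tr(A-cI)^2\ge c^2/4-c(M-c)/2$. The centred inequality requires $\tr(A-cI)^2\ge0$, and the certificate's bound falls below this once $M>3c/2$.
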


\begin{proof}
Write $Z=Q_-$, so that $A=(P-Z)+Q_+$. The cross term is
$2\tr((P-Z)Q_+)=2\tr(PQ_+)\ge0$, since $ZQ_+=0$. Hence
$\tr A^2\ge\tr(P-Z)^2+\tr Q_+^2$. Also $\tr Q_+^2\ge2c\tr Q_+-c^2b$, because
$x^2\ge2cx-c^2$. It remains to show $\tr(P-Z)^2+2c\tr Z\ge\tr\varphi_c(P)$.
Since $\varphi_c(p)=p^2-(p-c)_+^2$, this is
$\tr X_+^2-2\tr(XZ)+\tr Z^2\ge0$ with $X=P-cI$. Now
$\tr(XZ)\le\tr(X_+Z)$ because $\tr(X_-Z)\ge0$. So the left side is at least
$\tr(X_+-Z)^2\ge0$.
\end{proof}

\section{Proof of Theorem~\ref{thm:main}}\label{sec:count}

\emph{The window.} Following~\cite{Trmdy} (files \texttt{design.py} and
\texttt{kernel.py}), put
\[
 v(u)=\cos(\sqrt2\,u)+10^{-9}\sum_{j=1}^{6}\kappa_j\cos(2\pi ju)\quad(|u|\le\tfrac12),
 \qquad f=v\Big/\!\int_{-1/2}^{1/2}v ,
\]
with $(\kappa_j)=(3322500,-7609135,1190194,-731476,-1680572,1141360)$.
Both $v$ and $f$ vanish outside $I=[-\frac12,\frac12]$. On $I$, $v>0$ (its
minimum there is about $0.75$), and $\int v=\sqrt2\sin(1/\sqrt2)$. Let
$K(x)=\hat f(x)=\int f(u)e^{-2\pi ixu}\,du$, so that $K(0)=1$ and
$K^2=(\hat v/\hat v(0))^2$ is the verifier's kernel.

\emph{The operator.} Let $L=\log T$ and $z_\rho=i(\rho-\frac12)L/(2\pi)$ for
$0<\gamma\le T$. On-line zeros give real $z_\rho$. An off-line zero pairs with
$1-\bar\rho$, which has the same multiplicity and the point $\bar z_\rho$.

The function $f$ jumps at $\pm\frac12$, so $f\ast f$ is not twice
differentiable, and the smoothing below is necessary. Since $\sqrt f$ is smooth
and positive on the interior of $I$, multiplying it by even smooth cutoffs
that tend to $\mathbf 1_I$ and renormalising gives
$f_\varepsilon=\eta_\varepsilon^2$. Here $\eta_\varepsilon$ is real, even,
smooth and supported inside $I$, $\int f_\varepsilon=1$,
$d_\varepsilon=\|f_\varepsilon-f\|_1\to0$ and $f_\varepsilon\to f$ in $L^2$.

Put $K_\varepsilon=\hat f_\varepsilon$ and
$F_z(u)=\eta_\varepsilon(u)e^{-2\pi iuz}$. With inner products conjugate-linear
in the first slot, $\langle F_z,F_w\rangle=K_\varepsilon(w-\bar z)$. The operator
$A=\sum_z\nu_zF_z\langle F_{\bar z},\cdot\rangle$, where $\nu_z$ is the
multiplicity, is Hermitian. It satisfies $\tr A=N$ and
$\tr A^2=\sum_{z,w}\nu_z\nu_wK_\varepsilon(z-w)^2=:E_\varepsilon$, using that
$K_\varepsilon$ is even. All operators act on the finite-dimensional span of
the $F_z$.

\begin{proposition}[{analytic input \cite[Lemma~5]{BGST}; computer-assisted bound \cite{Trmdy}}]\label{prop:energy}
$E_\varepsilon=(C(f_\varepsilon)+o(1))N$, where
$C(g)=\int g^2+\iint|s-t|g(s)g(t)$. Moreover
$C(f_\varepsilon)\to C(f)\le2-H$ with $H=\frac{3362285207}{5000000000}$.
\end{proposition}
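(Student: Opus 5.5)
The proposition has two separate parts: the asymptotic for $E_\varepsilon$ and the numerical bound on $C(f)$. I would prove them separately.

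\emph{The asymptotic.} First I would unfold $E_\varepsilon=\sum_{z,w}\nu_z\nu_wK_\varepsilon(z-w)^2$ as a sum over ordered pairs of zeros $(\rho,\rho')$ with multiplicity. The weight is $w(x)=K_\varepsilon(x)^2$, evaluated at $x=z_\rho-z_{\rho'}=i(\rho-\rho')L/(2\pi)$. Since $f_\varepsilon=\eta_\varepsilon^2$ is smooth and compactly supported in $I$, its transform $K_\varepsilon$ is entire. Therefore $w=\widehat{f_\varepsilon\ast f_\varepsilon}$ is entire, even, and its Fourier transform $f_\varepsilon\ast f_\varepsilon$ is smooth and supported in $[-1,1]$. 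These are exactly the hypotheses of the unconditional pair-correlation formula \cite[Lemma~5]{BGST}. That lemma evaluates sums of the form $\sum_{\rho,\rho'}w\bigl(i(\rho-\rho')L/2\pi\bigr)$ for test functions whose Fourier transforms are supported in $[-1,1]$, and it needs no hypothesis on the location of the zeros. I would invoke it with $\hat w=g\ast g$, where $g=f_\varepsilon$. Montgomery's theorem then gives the sum as $N$ times the pairing of $\hat w$ against $\delta_0+|t|$ on $[-1,1]$, plus $o(N)$:
\[
\frac{E_\varepsilon}{N}\to (g\ast g)(0)+\int_{-1}^{1}|t|\,(g\ast g)(t)\,dt=\int g^2+\iint|s-t|g(s)g(t),
\]
using that $g$ is even. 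This is $C(f_\varepsilon)$. The only bookkeeping is to confirm that the multiplicity weighting and the off-line pairing in the definition of $A$ match the normalisation of the sum in \cite{BGST}. Both sum over all zeros with multiplicity, so they should agree.

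\emph{Continuity.} $C(g)$ is a continuous function of $g$ on bounded sets in $L^1\cap L^2$ supported in $I$. Indeed, $\int g^2$ is continuous in $L^2$. The term $\iint|s-t|g(s)g(t)$ changes by at most $\|g-f\|_1(\|g\|_1+\|f\|_1)$, because $|s-t|\le1$ on $I\times I$. Since $d_\varepsilon\to0$ and $f_\varepsilon\to f$ in $L^2$, we get $C(f_\varepsilon)\to C(f)$.

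\emph{The numerical bound $C(f)\le 2-H$.} This is the main obstacle and is not provable by hand. $C(f)$ is an explicit integral of a trigonometric polynomial in $u$, built from $\cos(\sqrt2u)$ and $\cos(2\pi ju)$, against $1$ and $|s-t|$. Both integrals have closed forms in terms of $\sin,\cos$ of $1/\sqrt2$ and rational multiples of $\pi$, so $C(f)$ is an explicit finite expression. I would evaluate it with interval arithmetic, as done in \cite{Trmdy}, and check that the certified upper endpoint is at most $2-H$. Because of the small $10^{-9}$ perturbation, the margin is tiny, so rigorous enclosures are essential. I would cite the \cite{Trmdy} computation and re-run it independently.
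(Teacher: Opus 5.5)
There is a genuine gap in how you invoke \cite[Lemma~5]{BGST}; the continuity argument for $C(f_\varepsilon)\to C(f)$, the appeal to the certified enclosure of \cite{Trmdy}, and the identification of $R(0)+\int|t|R(t)\,dt$ with $C(g)$ for $R=g\ast g$ are all fine. That lemma does not evaluate the unweighted sum $\sum_{\rho,\rho'}\hat R\bigl(i(\rho-\rho')L/(2\pi)\bigr)$. Like Montgomery's original theorem, it carries the weight $4/(4-(\rho-\rho')^2)$, which on the line is $4/(4+(\gamma-\gamma')^2)$. Applied to $R_\varepsilon=f_\varepsilon\ast f_\varepsilon$, it therefore computes $\sum_{\rho,\rho'}K_\varepsilon(z_\rho-z_{\rho'})^2\cdot 4/(4-(\rho-\rho')^2)$ (with multiplicity), not $E_\varepsilon$. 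Checking that the multiplicity and off-line conventions agree, as you propose, does not address this.

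Nor can you drop the weight because it is $1+O(x^2/L^2)$. For pairs involving off-line zeros, $x=z_\rho-z_{\rho'}$ has imaginary part $(\beta-\beta')L/(2\pi)$, so the summands are complex. Moreover, $|K_\varepsilon(x)|^2$ is bounded only by $T^{|\beta-\beta'|}$. A perturbative estimate would therefore need information about off-line zeros that you do not have.

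The missing idea is to cancel the weight exactly. Since $\widehat{R''}(x)=-4\pi^2x^2\hat R(x)$, one has
\[
 \bigl(R_\varepsilon-R_\varepsilon''/(4L^2)\bigr)^{\wedge}(x)=\Bigl(1+\frac{\pi^2x^2}{L^2}\Bigr)K_\varepsilon(x)^2 .
\]
At $x=i(\rho-\rho')L/(2\pi)$ the factor equals $1-(\rho-\rho')^2/4$, the reciprocal of the weight. Apply the lemma separately to $R_\varepsilon$ and to $R_\varepsilon''$, both fixed admissible test functions. Combining the two results gives $E_\varepsilon$ exactly. The $R_\varepsilon''$ part contributes only $O_\varepsilon(N/L^2)$, so the main term $C(f_\varepsilon)N$ survives.

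This is also the real reason for the smoothing. Since $f$ jumps at $\pm\frac12$, $f\ast f$ is not twice differentiable, so $R''$ would not be admissible. In your proposal, smoothness is used only to make $K_\varepsilon$ entire, which compact support already guarantees.
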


The asymptotic is the unconditional Montgomery theorem~\cite[Lemma~5]{BGST}
(see also~\cite{Lam,AF}). It applies to real, even, integrable test functions
supported in $[-1,1]$ and Lipschitz at $0$, and it sums over all zeros,
on or off the line, with multiplicity. It carries the weight
$w(\rho-\rho')=4/(4-(\rho-\rho')^2)$. With $R_\varepsilon=f_\varepsilon*f_\varepsilon$,
\[
 \bigl(R_\varepsilon-R_\varepsilon''/(4L^2)\bigr)^{\wedge}(x)
 =\Bigl(1+\frac{\pi^2x^2}{L^2}\Bigr)K_\varepsilon(x)^2 .
\]
At $x=z_\rho-z_{\rho'}$ the factor is $1-(\rho-\rho')^2/4=1/w(\rho-\rho')$.
Both $R_\varepsilon$ and $R_\varepsilon''$ are admissible fixed test functions.
So applying the lemma to both removes the weight exactly. The
$R_\varepsilon''$ term contributes $O_\varepsilon(N/L^2)$, and the total error
is $o_\varepsilon(N)$.

The bound $C(f)\le2-H$ is a certified interval enclosure~\cite{Trmdy}. An
independent 40-digit (non-interval) evaluation gives
$2-C(f)=0.67245704141454428878\ldots$, consistent with it.

\emph{Splitting.} Let $n_1,n_2$ count the distinct on-line zeros of
multiplicity $1,2$. Let $h$ count those of multiplicity $\ge3$, and let $k$
count off-line pairs. Then $N_d=n_1+n_2+h+2k$. Let $P$ be the part of $A$
coming from the $l=n_1+n_2$ low-multiplicity points $y_j$. Then $P=VDV^*$ with
$Ve_j=F_{y_j}$, and $U=V^*V$ has $U_{ij}=K_\varepsilon(y_i-y_j)$. So $P$ has
the nonzero spectrum of $G=D^{1/2}UD^{1/2}$. Since $\varphi_c(0)=0$, it follows
that $\tr\varphi_c(P)=\tr\varphi_c(G)$. Also $\tr P=n_1+2n_2$.

In $Q=A-P$, each high point gives a positive rank-one term. Each off-line pair
gives $\nu(vw^*+wv^*)=\frac\nu2\bigl((v+w)(v+w)^*-(v-w)(v-w)^*\bigr)$, with
$v=F_z$ and $w=F_{\bar z}$, which has at most one positive eigenvalue. Since
$n_+(\sum_jQ_j)\le\sum_jn_+(Q_j)$, $Q$ has at most $h+k$ positive eigenvalues.

Fix $\tau\ge0$ and $c\ge\max(1+\tau,2+\tau/2)$, so that $c\ge2$.
Lemma~\ref{lem:threshold} with $b=h+k$ and Theorem~\ref{thm:gram} give
\[
 E_\varepsilon\ \ge\ 2cN-(2c-1)n_1-(4c-4)n_2-c^2(h+k)+\tr\Psi_\tau(U).
\]
Compare this with $3N-2N_d$. A high point of multiplicity $j\ge3$ leaves
$(2c-3)j+2-c^2\ge r_h:=6c-7-c^2$, and an off-line pair of multiplicity $j\ge1$
leaves $(4c-6)j+4-c^2\ge r_k:=4c-2-c^2$. Hence
\begin{equation}\label{eq:key}
 E_\varepsilon\ \ge\ 3N-2N_d+\tr\Psi_\tau(U)+r_hh+r_kk.
\end{equation}

\emph{The local defect.}

\begin{proposition}[imported, computer-assisted~\cite{Trmdy}]\label{prop:seven}
There are rational $w_{ij}\ge0$ with $\sum_iw_{i,i+r}=2$ for $r=1,\dots,6$,
such that for all real $y_0\le\dots\le y_6$,
\[
 \tfrac1{2736}(y_6-y_0)+\textstyle\sum_{0\le i<j\le6}w_{ij}K(y_j-y_i)^2\ \ge\ \delta:=\tfrac{891}{200000}.
\]
\end{proposition}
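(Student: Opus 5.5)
The proof is a certified finite computation, organised so that only a compact region needs to be searched. First I fix the weights. I would pick the rational $w_{ij}\ge0$ with the normalisation $\sum_iw_{i,i+r}=2$ for $r=1,\dots,6$ by a linear program. Sample configurations $y_0\le\dots\le y_6$ on a grid, and maximise the minimum over the sample of
\[
 \Phi(y)=\tfrac1{2736}(y_6-y_0)+\sum_{i<j}w_{ij}K(y_j-y_i)^2 .
\]
Then round the weights to rationals that keep the constraints exact. I would impose the reversal symmetry $w_{ij}=w_{6-j,6-i}$, so that $\Phi$ is invariant under $y_i\mapsto-y_{6-i}$. This halves the search. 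Only the final rational weights matter for the proof; how they were found is irrelevant.

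Next I reduce the domain. Translation invariance lets me write everything in the gaps $g_i=y_i-y_{i-1}\ge0$, $i=1,\dots,6$, so that $y_j-y_i=g_{i+1}+\dots+g_j$. Every summand is nonnegative. Hence $\Phi\ge\delta$ holds trivially once $S=\sum g_i\ge2736\,\delta=12.18888\ldots$. The remaining domain is the compact simplex $\{g\ge0,\ S\le12.19\}$.

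On this simplex I would run an interval branch-and-bound. The tools are an explicit closed form for $\hat v$, namely a combination of terms $\frac{\sin(\pi x\pm a)}{\pi x\pm a}$ coming from the cosines, evaluated in interval arithmetic with removable singularities handled by Taylor expansion, together with a Lipschitz bound $|K'(x)|\le2\pi\int|u|f(u)\,du\le\pi$. On a box $\mathcal B$ of gaps, each pair distance lies in an interval $J_{ij}$. I bound $\inf_{J_{ij}}K^2$ from below by enclosing $K$ on $J_{ij}$: if the enclosure avoids $0$, I take the smaller endpoint squared, and otherwise the bound is $0$. I add the exact linear lower bound on the spread term. If the resulting lower bound is $\ge\delta$, the box is discarded; otherwise it is bisected along its widest coordinate.

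Several facts prune most of the domain quickly:
\begin{itemize}
\item If any consecutive gap is small, say $K(g)^2\ge\delta/\min w$, that single term already suffices.
\item Since $K^2=O(x^{-2})$ with an explicit constant from integrating by parts once (the jump of $f$ at $\pm\frac12$), the contribution of distant pairs can be controlled uniformly.
\end{itemize}
On boxes where naive interval bounds stall, I would switch to a second-order bound: a gradient enclosure together with a Hessian bound from $|K''|\le4\pi^2\int u^2f$. This lets boxes near a strict local minimum be closed by convexity instead of endless bisection.

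The main obstacle is the near-extremal region. These are configurations in which essentially every weighted pair distance $y_j-y_i$ sits close to a zero of $K$. With all gaps near the first zero of $K$ the spread is about $6\times$ that zero, and the spread term then contributes only a few thousandths. The margin $\delta=891/200000$ is small, so the lower bound there is nearly tight. In six dimensions this forces very fine subdivision near the real zeros of $K$, where interval enclosures of $K^2$ degenerate to $0$.

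At those points I would first try replacing $\inf K^2\ge0$ by the sharper bound $K(x)^2\ge K'(x_0)^2(x-x_0)^2-\text{(error)}$ near each simple zero $x_0$ of $K$, which I would certify separately. This recovers the quadratic growth that interval arithmetic otherwise discards. If the box count is still prohibitive, I would re-solve the weight LP with the hard configurations found by the search added to the sample, and iterate until the certified minimum clears $\delta$.
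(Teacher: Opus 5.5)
Your plan is essentially the paper's route. The paper does not prove this proposition itself; it imports it from the Arb-based interval branch-and-bound verifier of its reference, which uses fixed rational weights, and re-runs that verifier independently ($2{,}168{,}370$ interval nodes, of which $453{,}136$ are closed by a convex-tangent pruner, the counterpart of your second-order/convexity step near local minima). As in the paper, the real content is the specific weights and the completed certified search, which neither your proposal nor the paper writes out; the parts you do spell out are correct, namely the reduction to the compact gap simplex $\sum g_i\le 2736\,\delta\approx12.19$, the closed form for $\hat v$ and the bounds $|K'|\le\pi$, $|K''|\le4\pi^2\int u^2f$.
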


Take the Gram matrix $U_B$, built with $K$, of $m$ consecutive points. It is
positive semidefinite because $f\ge0$. Summing Proposition~\ref{prop:seven} over
the $m-6$ windows gives weight at most $2$ per pair. Since
$\tr(U_B-I)^2=2\sum_{i<j}K(y_j-y_i)^2$, this gives
$\tr(U_B-I)^2+\frac1{2736}W\ge\Delta_m:=(m-6)\delta$, where $W$ is the total
window length. If $\Delta_m\le\tau^2$, then
$\tr\Psi_\tau(U_B)+\frac1{2736}W\ge\Delta_m$. Indeed, if every eigenvalue is at
most $1+\tau$, then $\tr\Psi_\tau(U_B)=\tr(U_B-I)^2$. Otherwise one eigenvalue
alone contributes at least $\tau^2$, and $\Psi_\tau\ge0$.

Replacing $K$ by $K_\varepsilon$ changes each entry by at most $d_\varepsilon$,
so the block changes by at most $md_\varepsilon$ in norm. Both blocks are Gram
matrices, so their eigenvalues lie in $[0,\infty)$, where $\Psi_\tau$ is
$\max(2,2\tau)$-Lipschitz. By Weyl's inequality, $\tr\Psi_\tau$ changes by at
most $\max(2,2\tau)m^2d_\varepsilon$.

By convexity of $\Psi_\tau$ (the pinching inequality
$\tr\Psi_\tau(X)\ge\sum_B\tr\Psi_\tau(X_{BB})$), $\tr\Psi_\tau(U)$ dominates
the sum over any partition into consecutive blocks. Average over the $m$
shifted partitions:
\begin{itemize}
\item the full blocks over all shifts number at least $l-2m$, so the
 $\Delta_m$ terms total at least $\Delta_m(l-2m)/m$;
\item each window lies inside a block for at most $m-6$ of the shifts, and each
 gap lies in at most $6$ windows, so the $W$ terms total at most
 $6(m-6)(y_l-y_1)/(2736m)$.
\end{itemize}
The result is
\begin{equation}\label{eq:defect}
 \tr\Psi_\tau(U)\ \ge\ a\,l-\beta\,(y_l-y_1)-O_{m,\tau}(d_\varepsilon)\,N-O_m(1),
 \qquad a=\frac{\Delta_m}m,\quad \beta=\frac{6(m-6)}{2736\,m}.
\end{equation}
Here the spread $y_l-y_1$ is defined to be $0$ when $l\le1$. If no shift has
a full block, \eqref{eq:defect} still holds: its right side is then at most
$0$ once the constant in $O_m(1)$ is taken at least $2\Delta_m$, and
$\tr\Psi_\tau(U)\ge0$.
Shifted-block pinching is from~\cite{Ainta}. The window-in-block pressure count
is from~\cite{Tawan}; see also~\cite{Trmdy,Shi}. What is new is the use of both
on \emph{all} distinct points of multiplicity one or two.

\emph{Constants.} Take $\tau=\frac{12}5$, $c=\frac{17}5$ and $m=1298$. Then
$\Delta_m=\frac{287793}{50000}<\tau^2=\frac{144}{25}$, $a=\frac{26163}{5900000}$ and
$\beta=\frac{17}{7788}$. Also $r_h-a=\frac{10829837}{5900000}>0$ and
$r_k-2a=\frac{91837}{2950000}>0$.

\emph{Conclusion.} The points $y_j$ lie in an interval of length
$TL/2\pi=N+O(T)=N+o(N)$, and $l=N_d-h-2k$. So~\eqref{eq:key}, \eqref{eq:defect}
and Proposition~\ref{prop:energy} give
\[
 (2-a)N_d\ \ge\ \bigl(3-C(f_\varepsilon)-\beta-O_{m,\tau}(d_\varepsilon)\bigr)N
 +(r_h-a)h+(r_k-2a)k-o(N).
\]
Let $T\to\infty$ and then $\varepsilon\to0$. This gives
$\liminf N_d/N\ge(1+H-\beta)/(2-a)=q$ in exact arithmetic. \qed

\section{Verification and nature of this work}\label{sec:verify}

\emph{Formal proofs.} The following are proved in Lean~4 with
Mathlib~\cite{mathlib}; the source is in the ancillary files:
\begin{itemize}
\item Lemma~\ref{lem:var};
\item Theorem~\ref{thm:gram}, for all $\tau\ge0$ and $c\ge\max(1+\tau,2+\tau/2)$;
\item Lemma~\ref{lem:threshold}, in both exact-count and ``at most $b$'' forms;
\item the block dichotomy;
\item the combination of Lemma~\ref{lem:threshold} and Theorem~\ref{thm:gram}
 (the inequality preceding~\eqref{eq:key});
\item the counting assembly leading to $(2-a)N_d\ge(1+H-\beta)N-{}$errors, from
 explicitly stated hypotheses;
\item all exact constants.
\end{itemize}
Every theorem depends only on the axioms \texttt{propext},
\texttt{Classical.choice} and \texttt{Quot.sound}. The code contains no
\texttt{sorry}.

Not formalized:
\begin{itemize}
\item the operator construction and its reduction to matrices: $\tr P=n_1+2n_2$,
 $\tr\varphi_c(P)=\tr\varphi_c(G)$, and $n_+(Q)\le h+k$;
\item the energy asymptotic;
\item the certified local inequality;
\item the pinching and block averaging;
\item the smoothing $K\to K_\varepsilon$.
\end{itemize}
These enter the Lean statements only as explicit hypotheses.

\emph{Computation.} Proposition~\ref{prop:seven} comes from the
Arb-based~\cite{Arb} verifier of~\cite{Trmdy}, commit \texttt{1610b97}. Its
exhaustive search was re-run on separate hardware: 96 shards and
$2{,}168{,}370$ interval nodes, all verified. Both table hashes and all pruning
counts match the published run. The run record is in the ancillary files. The
verifier code was not audited line by line. As noted upstream, this
certificate has no tangent-free replay yet: $453{,}136$ of its nodes are
certified by a convex-tangent pruner. The enclosure of $C(f)$ is taken
from~\cite{Trmdy}, with the independent check stated above.

\emph{Status.} The analytic inputs~\cite{AF,Lam,BGST} and the
certificates~\cite{Ainta,Trmdy,Shi,Tawan} are recent and mostly not yet refereed.

\emph{Nature of this work.} This paper is primarily an experiment in
AI-assisted mathematical research. The arguments, the Lean formalization and
much of the text were developed with substantial assistance from AI systems
(OpenAI Codex and Anthropic Claude) under the author's direction. The author
defined and steered the research programme, reviewed all steps, and found the
$\tau$-generalisation together with OpenAI Codex. The author takes full
responsibility for the content.

\bigskip

\end{document}